\newtheorem{thm}{Theorem}
\newtheorem{defn}[thm]{Definition}
\newtheorem{cor}[thm]{Corollary}
\theoremstyle{definition}
\newtheorem{rmk}{Remark}
\theoremstyle{definition}
\numberwithin{equation}{section}
\begin{document}
\date{\today}
\title[Thin Monodromy in  $\mathrm{O}(5)$]{Thin Monodromy in $\mathrm{O}(5)$}
\author{Jitendra Bajpai}
\address{Max Planck Institute for Mathematics, Vivatsgasse 7, 53111 Bonn, Germany}
\email{jitendra@mpim-bonn.mpg.de}
\curraddr{Department of Mathematical and Statistical Sciences, University of Alberta, Canada}
\email{jbajpai@ualberta.ca}
\author{Martin Nitsche}
\address{Institute of Algebra and Geometry, Karlsruhe Institute of Technology, Germany}
\email{martin.nitsche@kit.edu}
\subjclass[2010]{Primary: 22E40;  Secondary: 32S40;  33C80}  
\keywords{Hypergeometric group, monodromy representation, orthogonal group}


\begin{abstract}
This article studies the orthogonal hypergeometric groups of degree five.
We establish the thinness of 12 out of the 19 hypergeometric groups of type $O(3,2)$ from~\cite[Table~6]{BS}.
Some of these examples are associated with Calabi-Yau 4-folds. 
We also establish the thinness of 9 out of the 17 hypergeometric groups of type $O(4,1)$ from~\cite{FMS}, where the thinness of 7 other cases was already proven. The $O(4,1)$ type groups were predicted to be all thin and our result leaves just one case open.
\end{abstract}

\maketitle


\section{Introduction}

The hypergeometric differential equation of order $n$ with parameters $\alpha,\beta\in\mathbb{Q}^n$ is defined on $\mathbb{CP}^1\setminus\{0,1,\infty\}$ by
\[
\big[z(\theta+\alpha_{1})\cdot\ldots\cdot(\theta+\alpha_{n})-(\theta+\beta_{1}-1)\cdot\ldots\cdot(\theta+\beta_{n}-1)\big]u(z)=0,
\]
where $\theta=z\frac{d}{dz}$. The monodromy action of the fundamental group on the local solution space $V\cong\mathbb{C}^n$ defines a group representation $\rho\colon\pi_1(\mathbb{CP}^1\setminus\{0,1,\infty\})\to\mathrm{GL}(V)$
and its image is called the hypergeometric group $\Gamma(\alpha,\beta)$.
If the polynomials
\begin{align*}
f(x) & \vcentcolon=\prod_{j=1}^{n}(x-e^{2\pi i\alpha_{j}})=x^n+a_{n-1}x^{n-1}+\cdots+a_1x+a_0,\\
g(x) & \vcentcolon=\prod_{j=1}^{n}(x-e^{2\pi i\beta_{j}})=x^n+b_{n-1}x^{n-1}+\cdots+b_1x+b_0
\end{align*}
do not have a common root, then by a result of Levelt (see \cite[Thm.~3.5]{BH}) there exists a basis for $V$ such that the images under $\rho$ of the loops around $0,1,\infty\in\mathbb{CP}^1$ are represented by the matrices
\begin{align}\label{eq:companion-matrices}
g_{\infty} & \mapsto A=\begin{psmallmatrix}
0&0&\cdots&0&-a_0\\1&0&\cdots&0&-a_1\\0&1&\cdots&0&-a_2\\ \vdots&\vdots&\ddots&\vdots&\vdots\\ 0&0&\cdots&1&-a_{n-1}
\end{psmallmatrix}, & g_{0} & \mapsto B^{-1}=\begin{psmallmatrix}
0&0&\cdots&0&-b_0\\1&0&\cdots&0&-b_1\\0&1&\cdots&0&-b_2\\ \vdots&\vdots&\ddots&\vdots&\vdots\\ 0&0&\cdots&1&-b_{n-1}
\end{psmallmatrix}^{-1}, & g_{1} & \mapsto A^{-1}B.
\end{align}
When $f$ and $g$ are products of cyclotomic polynomials, $\Gamma(f,g)\vcentcolon=\Gamma(\alpha,\beta)=\langle A,B\rangle$ is a subgroup of $\mathrm{GL}(n,\mathbb{Z})$.
It is a question due to Sarnak~\cite{Sa14}, which of these groups are arithmetic and which thin.

\begin{defn}
A hypergeometric group $\Gamma(f,g)$ is called arithmetic if it has finite index in $\mathrm{G}(\mathbb{Z})$, and thin if it has infinite index in $\mathrm{G}(\mathbb{Z})$, where $\mathrm{G}$ is the Zariski closure of $\Gamma(f,g)$.
\end{defn}

Sarnak's question has witnessed many interesting developments.
In particular, Venkataramana~\cite{V17} has constructed 11 infinite families of higher rank arithmetic orthogonal hypergeometric groups, and Fuchs--Meiri--Sarnak~\cite{FMS} have constructed 7 infinite families of hyperbolic thin orthogonal hypergeometric groups.
For a detailed account on recent progress see the introduction of~\cite{bdss}.
The problem is usually broken into different parts corresponding to the Zariski closure of $\Gamma(f,g)$, which -- setting aside the imprimitive case \cite[Def.~5.1]{BH} -- can be either symplectic, orthogonal, or finite.
In this article, we consider the orthogonal case in degree $n=5$.

From the results of~\cite{BH} it follows that, up to scalar shift (see~\cite[Def.~5.5]{BH}), there are exactly $77$ cases that satisfy the above conditions. These are discussed in~\cite{BS}.
Four of these cases correspond to finite monodromy groups and 17 cases correspond to monodromy groups for which the Zariski closure is of type $O(4,1)$, hence has real rank one.
It follows from \cite{FMS} that 7 of the type $O(4,1)$ cases are thin. The other 10 cases, which were still open, are listed in Table~\ref{ta:O41-thin} and Table~\ref{ta:O41-open} below.

For the remaining 56 cases the Zariski closure is $O(3,2)$, with real rank two, see~\cite{BS}.
Out of these 56 cases, 37 have already been proven to be arithmetic: 11 cases~\cite[Table~2]{BS} by the results of Venkataramana~\cite{V17}, 2 cases~\cite[Table~3]{BS} by the results of Singh~\cite{S15O}, 23 cases~\cite[Table~4]{BS} by Bajpai--Singh~\cite{BS}, and 1 case by Bajpai--Singh--Singh~\cite{BSS}.
There remain 19 cases of type $O(3,2)$ whose arithmeticity or thinness was still undetermined. These cases are listed in Table~\ref{ta:O32-thin} and Table~\ref{ta:O32-open} below.
\vspace{0.2cm}

Of particular relevance among the 56 examples of type $O(3,2)$ are the 14 pairs with \emph{maximally unipotent monodromy}, that is, those hypergeometric groups $\Gamma(f,g)$ where the polynomial $f$ is associated to $\alpha=\left(0,0,0,0,0\right)$.

It is well known that in dimension $n=4$, the 14 symplectic hypergeometric groups with maximally unipotent monodromy emerge as images of monodromy representations arising from Calabi-Yau 3-folds, see~\cite{DM06}. It was shown~\cite{S15S, SV, BT} that exactly half of these groups are arithmetic and half are thin.
Similarly, for $n=6$ it is expected that many of the 40 symplectic hypergeometric groups with maximally unipotent monodromy arise from Calabi-Yau 5-folds~\cite{GMP,LTY}. Out of these groups, 23 are arithmetic and 17 are thin~\cite{bdss,BDN21,BDN22}.

Now, for $n=5$, it is known that at least some of the 14 orthogonal hypergeometric groups with maximally unipotent monodromy arise from Calabi-Yau 4-folds; see~\cite[Sec.~3.9.3]{BKL} for a detailed account. One of the purposes of this work is to investigate the dichotomy between arithmetic and thin monodromy among the 14 groups with maximally unipotent monodromy.
Out of these cases, 2 have been shown to be arithmetic by Singh~\cite{S15O}, and in this article we prove that 9 of them are thin. The other 3 cases remain open. But our result means that in dimension $n=5$ \emph{more} than half of the hypergeometric groups with maximally unipotent monodromy are thin, in contrast to the symplectic cases in both dimension $n=4$ and $n=6$.

One interesting example is example~\ref{T7} in Table~\ref{ta:O32-thin}. This is the sextic case, where $\alpha=\left(0,0,0,0,0\right)$, $\beta=\big(\frac{1}{6},\frac{2}{6},\frac{3}{6},\frac{4}{6},\frac{5}{6}\big)$. We refer to~\cite[Sec.~6]{KP} for an account of this particular case.

\subsection{Results}
With a ping pong argument very similar to that in~\cite{BT} and~\cite{BDN21} we obtain the following result.

\begin{thm}\label{thm:main-result}
Let $G$ be any one of the groups in Tables~\ref{ta:O32-thin} and \ref{ta:O41-thin}, let $A,B$ be the generators from Equation~\ref{eq:companion-matrices} and let $T=BA^{-1}$. Then

\begin{itemize}
\item $G=\langle\pm T\rangle\ast_{\pm I}\langle B\rangle$ if $B^k=-I$ for some $k\in\mathbb{N}$, and
\item $G=\langle T\rangle\ast\langle B\rangle$ otherwise.
\end{itemize}
\end{thm}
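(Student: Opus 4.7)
The plan is to apply the ping-pong lemma directly to the generating pair $(T,B)$. Since $A = T^{-1}B$, we have $\langle A,B\rangle = \langle T,B\rangle$, so the decomposition statement really concerns the pair we control. For each case listed in Tables~\ref{ta:O32-thin} and~\ref{ta:O41-thin}, the element $B$ is semisimple of finite order (its eigenvalues are roots of unity), whereas $T = BA^{-1}$ represents the local monodromy around $z=1$, which in the hypergeometric setting is a complex reflection and hence has a single eigenvalue different from $1$. This spectral rigidity makes the attracting and repelling geometry of $T$ easy to describe.

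The concrete plan is to build two disjoint subsets $U,V$ of a suitable $G$-space on which to play ping-pong. Taking $U$ to be a small projective neighborhood of the attracting locus of $T$ in $\mathbb{P}^4(\mathbb{R})$, and $V$ to be a $B$-invariant neighborhood of complementary eigendirections, I would verify that $T^n(V) \subset U$ for every $n$ with $T^n \notin \{\pm I\}$ and $B^m(U) \subset V$ for every $m$ with $B^m \notin \{\pm I\}$. Because $B$ has finite order, the second inclusion reduces to finitely many checks; the first follows from the expanding behaviour of $T$ away from its repelling subspace. This mirrors the setups of~\cite{BT} and~\cite{BDN21}.

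Once $U$ and $V$ are in place, the conclusion is immediate from the standard versions of the ping-pong lemma. If $B^k \neq -I$ for all $k$, then no nontrivial element of $\langle T\rangle$ equals any element of $\langle B\rangle$ on the ping-pong space, and one obtains the free product $\langle T\rangle * \langle B\rangle$. If $B^k = -I$ for some $k$, then $-I \in \langle B\rangle \cap \langle \pm T\rangle$; in that case one performs the ping-pong modulo $\pm I$, via the induced action on $\mathbb{P}^4(\mathbb{R})$, which automatically factors through $G/\{\pm I\}$, yielding the amalgamated product $\langle \pm T\rangle *_{\pm I} \langle B\rangle$.

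The main obstacle is the case-by-case construction of $U$ and $V$. For the $O(4,1)$ cases one can proceed geometrically using the action of $G$ on hyperbolic $4$-space, realising $U,V$ as disjoint half-spaces bounded by totally geodesic hyperplanes as in~\cite{FMS,BT}. The $O(3,2)$ cases have real rank two and admit no hyperbolic model; there one works directly on $\mathbb{P}^4(\mathbb{R})$ or on the quadric of the invariant form, producing $U,V$ numerically from eigenvector data of $T$ and of $B$. Verifying the ping-pong inclusions then becomes a finite computation, performed separately for each row of the tables, which is the substantive work behind the uniform statement of the theorem.
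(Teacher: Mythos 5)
Your proposal founders on a factual error that undermines its core reduction: it is not true that $B$ is semisimple of finite order in every listed case. In eight of the 21 cases --- Cases 1--4, 10, 11 of Table~\ref{ta:O32-thin} and Cases 1, 2 of Table~\ref{ta:O41-thin} --- the vector $\beta$ has repeated entries, so $g$ has multiple roots and the companion matrix $B$ is not diagonalizable. For instance, in Case 1 of Table~\ref{ta:O32-thin} one has $\beta=\left(\frac{1}{2},\frac{1}{2},\frac{1}{2},\frac{1}{2},\frac{1}{2}\right)$, hence $g(x)=(x+1)^5$ and $B$ is a single Jordan block with eigenvalue $-1$, an element of infinite order. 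Your reduction of the inclusions $B^m(U)\subset V$ (for $B^m\neq\pm I$) to ``finitely many checks'' therefore collapses exactly where the theorem is hardest. The paper's proof devotes its entire second construction to this situation: it introduces an auxiliary involution $E$ satisfying $E^2=EBE^{-1}B=ETE^{-1}T=I$, shrinks $X$ to $F_0\cup -F_0$ with $F_0=F\cap -EF$, splits $Y$ into $Y^+=\bigcup_{1\leq i\leq\eta}\left(B^iF\cup -B^iF\right)$ (with $\eta$ the quasi-unipotence index of $B$) and $Y^-=EY^+$, and then verifies the finitely many inclusions $B(X\cup Y^+)\subseteq Y^+$ and $B^{-1}(X\cup Y^-)\subseteq Y^-$, which together imply the infinitely many required conditions on all powers of $B$. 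Nothing in your proposal supplies an argument of this kind.

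A second, related error: your dynamical picture of $T$ does not apply. In the orthogonal setting $T=BA^{-1}$ is a reflection of order $2$ (the paper notes this explicitly; it is the key difference from the symplectic case of \cite{BT,BDN21}, where $T$ is a transvection of infinite order). An involution has no attracting locus and no ``expanding behaviour away from its repelling subspace'', so one cannot take $U$ to be a small neighborhood of attracting directions and let iteration of $T$ do the work; the single inclusion $TY\subseteq X$ must hold outright, and in the paper it is arranged by constructing both table halves as explicit finite unions of rational polyhedral cones in $\mathbb{R}^5$ (spanned by the columns of the listed matrices $M$) and checking the inclusion with exact rational arithmetic. Your hyperbolic-half-space suggestion for the $O(4,1)$ cases inherits both problems: $B$ still has infinite order in Cases 1 and 2 of Table~\ref{ta:O41-thin}, and $T$ is still not proximal, which is presumably why the paper treats all cases, of both signatures, by the same cone construction in $\mathbb{R}^5$ rather than geometrically as in \cite{FMS}.
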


\begin{cor}
The hypergeometric groups in Tables~\ref{ta:O32-thin} and~\ref{ta:O41-thin} are thin.
\end{cor}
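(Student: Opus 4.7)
The plan is to derive the corollary from Theorem~\ref{thm:main-result} by comparing virtual cohomological dimensions. The conclusion of Theorem~\ref{thm:main-result} exhibits each group $G$ as the fundamental group of a finite graph of groups whose vertex groups are (virtually) cyclic and whose edge groups are finite (either trivial or the order-$2$ subgroup $\{\pm I\}$): indeed $\langle T\rangle$ and $\langle B\rangle$ are cyclic, while $\langle\pm T\rangle$ is virtually cyclic, and in the amalgamated case $\langle B\rangle$ is in fact finite since $B^k=-I$ forces $B^{2k}=I$. By Bass--Serre theory this makes $G$ virtually free, so $\mathrm{vcd}(G)\leq 1$.

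Next, let $\mathrm{G}$ denote the Zariski closure of $G$, so that $\mathrm{G}(\mathbb{R})$ is either $\mathrm{O}(3,2)$ or $\mathrm{O}(4,1)$, and $\mathrm{G}(\mathbb{Z})$ is an arithmetic lattice in $\mathrm{G}(\mathbb{R})$. By the Borel--Serre compactification,
\[
\mathrm{vcd}(\mathrm{G}(\mathbb{Z}))=\dim(X)-\mathrm{rk}_{\mathbb{Q}}(\mathrm{G}),
\]
where $X$ is the symmetric space of $\mathrm{G}(\mathbb{R})$, of real dimension $pq$ for $\mathrm{O}(p,q)$. Since $\mathrm{rk}_{\mathbb{Q}}(\mathrm{G})\leq\mathrm{rk}_{\mathbb{R}}(\mathrm{G})$, which equals $2$ for $\mathrm{O}(3,2)$ and $1$ for $\mathrm{O}(4,1)$, we obtain $\mathrm{vcd}(\mathrm{G}(\mathbb{Z}))\geq 4$ and $\geq 3$ respectively.

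Since virtual cohomological dimension is invariant under passage to finite-index subgroups, the assumption $[\mathrm{G}(\mathbb{Z}):G]<\infty$ would force $\mathrm{vcd}(G)=\mathrm{vcd}(\mathrm{G}(\mathbb{Z}))\geq 3$, contradicting the first paragraph. Hence $G$ has infinite index in $\mathrm{G}(\mathbb{Z})$, i.e.\ $G$ is thin.

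The main (but entirely routine) point is the virtual-freeness claim in the first step: this uses the standard fact that amalgamated free products of virtually cyclic groups over finite subgroups are virtually free. The rest of the argument depends only on the structural conclusion of Theorem~\ref{thm:main-result} and on the Borel--Serre vcd formula for arithmetic lattices in semisimple Lie groups, so no case-by-case analysis of the entries of Tables~\ref{ta:O32-thin} and~\ref{ta:O41-thin} is required at this stage.
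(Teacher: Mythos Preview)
Your argument is correct and gives a genuinely different proof from the one in the paper. The paper argues via first $L^2$-Betti numbers: lattices in $O(3,2)$ and $O(4,1)$ have $b_1^{(2)}=0$ by the proportionality principle, whereas the (amalgamated) free products produced by Theorem~\ref{thm:main-result} have $b_1^{(2)}>0$. You instead observe that these free products are virtually free, hence have $\mathrm{vcd}\le 1$, while Borel--Serre gives $\mathrm{vcd}(\mathrm{G}(\mathbb{Z}))\ge 3$; this is arguably more elementary, relying only on classical Bass--Serre theory and the Borel--Serre formula rather than on $L^2$-invariants. Your virtual-freeness step is fine: in the amalgamated case both factors are finite (since $T^2=I$ and $B^{2k}=I$), and in the free-product case $\langle T\rangle\ast\langle B\rangle\cong\mathbb{Z}/2\ast\langle B\rangle$ with $\langle B\rangle$ cyclic is virtually free by Kurosh. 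One small point worth tightening: rather than invoking the general slogan ``amalgamated free products of virtually cyclic groups over finite subgroups are virtually free'', it is cleaner to note that in every case arising here the group is either a free product $\mathbb{Z}/2\ast C$ with $C$ cyclic, or an amalgam of two \emph{finite} groups over $\{\pm I\}$, both of which are visibly fundamental groups of finite graphs of finite groups.
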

\begin{proof}
Both $O(3,2)$ and $O(4,1)$ have trivial first $L^2$-Betti numbers \cite[Ex.\ 1.6]{G}, and by the Proportionality Principle \cite[Cor.\ 0.2, Thm.\ 6.3]{G} the same is true for lattices in these groups. But the (amalgamated) free products in Theorem~\ref{thm:main-result} have non-trivial first $L^2$-Betti number~\cite[Thm.\ A.1]{BDJ}.
\end{proof}

\begin{rmk}
Simion Filip has informed us that he has also proven thinness of the groups in Table~\ref{ta:O32-thin} (among other results), using tools from Hodge theory \cite{Filip}.
\end{rmk}

{\renewcommand{\arraystretch}{2}
\begin{table}[htbp]
\small
\addtolength{\tabcolsep}{-1pt}
\caption{List of the $12$ monodromy groups for which the associated quadratic form $\mathrm{Q}$ has signature $(3,2)$ and thinness is shown in this article.}\label{ta:O32-thin}
\newcounter{rownum-0}
\setcounter{rownum-0}{0}
\centering
\begin{tabular}{|c|c|c|c|c|c|}
\hline
  No. &$\alpha$ & $\beta$ & No. &$\alpha$ & $\beta$\\ 
  \hline  
  \refstepcounter{rownum-0}\arabic{rownum-0}\label{T1} & $\big( 0,0,0,0,0\big)$  & $\big(\frac{1}{2},\frac{1}{2},\frac{1}{2},\frac{1}{2},\frac{1}{2}\big)$ &
  \refstepcounter{rownum-0}\arabic{rownum-0}\label{T2} & $\big( 0,0,0,0,0\big)$  & $\big(\frac{1}{2},\frac{1}{2},\frac{1}{2},\frac{1}{3},\frac{2}{3}\big)$\\ 
  \hline
  \refstepcounter{rownum-0}\arabic{rownum-0}\label{T3} &  $\big( 0,0,0,0,0\big)$ &  $\big(\frac{1}{2},\frac{1}{2},\frac{1}{2},\frac{1}{4},\frac{3}{4}\big)$&
  \refstepcounter{rownum-0}\arabic{rownum-0}\label{T4} &$\big( 0,0,0,0,0\big)$ & $\big(\frac{1}{2},\frac{1}{2},\frac{1}{2},\frac{1}{6},\frac{5}{6}\big)$\\ 
  \hline
  \refstepcounter{rownum-0}\arabic{rownum-0}\label{T7} & $\big( 0,0,0,0,0\big)$   &$\big(\frac{1}{2},\frac{1}{3},\frac{2}{3},\frac{1}{6},\frac{5}{6}\big)$  &
  \refstepcounter{rownum-0}\arabic{rownum-0}\label{T9} &  $\big( 0,0,0,0,0\big)$  & $\big(\frac{1}{2},\frac{1}{5},\frac{2}{5},\frac{3}{5},\frac{4}{5} \big)$\\
  \hline
  \refstepcounter{rownum-0}\arabic{rownum-0}\label{T10} &$\big( 0,0,0,0,0\big)$ & $\big(\frac{1}{2},\frac{1}{8},\frac{3}{8},\frac{5}{8}, \frac{7}{8} \big)$&
  \refstepcounter{rownum-0}\arabic{rownum-0}\label{T11} & $\big( 0,0,0,0,0\big)$  &$\big(\frac{1}{2},\frac{1}{10},\frac{3}{10},\frac{7}{10}, \frac{9}{10} \big)$ \\
  \hline
  \refstepcounter{rownum-0}\arabic{rownum-0}\label{T12} & $\big( 0,0,0,0,0\big)$ & $\big(\frac{1}{2},\frac{1}{12},\frac{5}{12},\frac{7}{12}, \frac{11}{12} \big)$&
   \refstepcounter{rownum-0}\arabic{rownum-0}\label{T13} &$\big(0,0,0,\frac{1}{4},\frac{3}{4}\big)$ &$\big(\frac{1}{2},\frac{1}{2},\frac{1}{2},\frac{2}{3},\frac{2}{3}\big)$\\ 
   \hline
   \refstepcounter{rownum-0}\arabic{rownum-0}\label{T15} &$\big(0,0,0,\frac{1}{6},\frac{5}{6}\big)$ &$\big(\frac{1}{2},\frac{1}{2},\frac{1}{2},\frac{2}{3},\frac{2}{3}\big)$&\refstepcounter{rownum-0}\arabic{rownum-0}\label{unknown-BS20} & $\big(0,0,0,\frac{1}{6},\frac{5}{6}\big)$  & $\big(\frac{1}{2},\frac{1}{5},\frac{2}{5},\frac{3}{5},\frac{4}{5} \big)$ \\
   \hline
   \end{tabular}
\end{table}}

{\renewcommand{\arraystretch}{2}
\begin{table}[htbp]
\small
\addtolength{\tabcolsep}{-1pt}
\caption{List of the $7$ monodromy groups for which the associated quadratic form $\mathrm{Q}$ has signature $(3,2)$ and arithmeticity or thinness remains unknown.}\label{ta:O32-open}
\setcounter{rownum-0}{0}
\centering
\begin{tabular}{|c|c|c|c|c|c|}
\hline
 No. &$\alpha$ & $\beta$ & No. &$\alpha$ & $\beta$\\ 
 \hline  
 \refstepcounter{rownum-0}\arabic{rownum-0}\label{O-BM1} & $\big( 0,0,0,0,0\big)$& $\big(\frac{1}{2},\frac{1}{3},\frac{1}{3},\frac{2}{3},\frac{2}{3}\big)$ &\refstepcounter{rownum-0}\arabic{rownum-0}\label{O-BM2} & $\big( 0,0,0,0,0\big)$  &$\big(\frac{1}{2},\frac{1}{3},\frac{2}{3},\frac{1}{4},\frac{3}{4}\big)$\\ 
 \hline 
 \refstepcounter{rownum-0}\arabic{rownum-0}\label{O-BM3} & $\big( 0,0,0,0,0\big)$&$\big(\frac{1}{2},\frac{1}{4},\frac{3}{4},\frac{1}{6},\frac{5}{6}\big)$
 &\refstepcounter{rownum-0}\arabic{rownum-0}\label{unknown-BS15} &$\big(0,0,0,\frac{1}{4},\frac{3}{4}\big)$ &$\big(\frac{1}{2},\frac{1}{3},\frac{1}{3},\frac{2}{3},\frac{2}{3}\big)$\\ 
 \hline
 \refstepcounter{rownum-0}\arabic{rownum-0}\label{unknown-BS18} &$\big(0,0,0,\frac{1}{6},\frac{5}{6}\big)$ & $\big(\frac{1}{2},\frac{1}{3},\frac{1}{3},\frac{2}{3},\frac{2}{3}\big)$&
 \refstepcounter{rownum-0}\arabic{rownum-0}\label{unknown-BS19} &$\big(0,0,0,\frac{1}{6},\frac{5}{6}\big)$ & $\big(\frac{1}{2},\frac{1}{3},\frac{2}{3},\frac{1}{4},\frac{3}{4}\big)$\\
 \hline
  \refstepcounter{rownum-0}\arabic{rownum-0}\label{unknown-BS21} &$\big(0,\frac{1}{10},\frac{3}{10},\frac{7}{10},\frac{9}{10}\big)$ & $\big(\frac{1}{2},\frac{1}{3},\frac{1}{3},\frac{2}{3},\frac{2}{3}\big)$& & & \\
 \hline
 \end{tabular}
\end{table}}

{\renewcommand{\arraystretch}{1.7}
\begin{table}[htbp]
\small
\addtolength{\tabcolsep}{-1pt}
\caption{List of the $9$ monodromy groups for which the associated quadratic form $\mathrm{Q}$ has signature $(4,1)$ and thinness is shown in this article.}\label{ta:O41-thin}
\setcounter{rownum-0}{0}
\centering
\begin{tabular}{ |c|  c|   c| c| c| c|}
\hline
  No. &$\alpha$ & $\beta$ & No. &$\alpha$ & $\beta$\\ \hline
  \refstepcounter{rownum-0}\arabic{rownum-0}\label{thin1-rank1} & $\big( 0,0,0,\frac{1}{3},\frac{2}{3}\big)$ & $\big(\frac{1}{2},\frac{1}{2},\frac{1}{2},\frac{1}{4},\frac{3}{4}\big)$  &\refstepcounter{rownum-0}\arabic{rownum-0}\label{thin2-BS13} & $\big( 0,0,0,\frac{1}{3},\frac{2}{3}\big)$  & $\big(\frac{1}{2},\frac{1}{2},\frac{1}{2},\frac{1}{6},\frac{5}{6}\big)$\\ \hline
  \refstepcounter{rownum-0}\arabic{rownum-0}\label{thin3-rank1} & $\big( 0,0,0,\frac{1}{3},\frac{2}{3}\big)$ &$\big(\frac{1}{2},\frac{1}{5},\frac{2}{5},\frac{3}{5},\frac{4}{5} \big)$&\refstepcounter{rownum-0}\arabic{rownum-0}\label{thin4-rank1} &$\big(0,0,0,\frac{1}{4},\frac{3}{4}\big)$ &$\big(\frac{1}{2},\frac{1}{3},\frac{2}{3},\frac{1}{6},\frac{5}{6}\big)$\\ \hline
  \refstepcounter{rownum-0}\arabic{rownum-0}\label{thin5-rank1} &$\big(0,0,0,\frac{1}{4},\frac{3}{4}\big)$ & $\big(\frac{1}{2},\frac{1}{5},\frac{2}{5},\frac{3}{5},\frac{4}{5} \big)$&\refstepcounter{rownum-0}\arabic{rownum-0}\label{thin6-BS16} &$\big(0,0,0,\frac{1}{4},\frac{3}{4}\big)$ & $\big(\frac{1}{2},\frac{1}{8},\frac{3}{8},\frac{5}{8}, \frac{7}{8} \big)$\\ \hline
  \refstepcounter{rownum-0}\arabic{rownum-0}\label{thin7-rank1} &$\big(0,0,0,\frac{1}{4},\frac{3}{4}\big)$ & $\big(\frac{1}{2},\frac{1}{12},\frac{5}{12},\frac{7}{12}, \frac{11}{12} \big)$&\refstepcounter{rownum-0}\arabic{rownum-0}\label{thin8-rank1} &$\big(0,0,0,\frac{1}{6},\frac{5}{6}\big)$  &$\big(\frac{1}{2},\frac{1}{8},\frac{3}{8},\frac{5}{8}, \frac{7}{8} \big)$\\ \hline
  \refstepcounter{rownum-0}\arabic{rownum-0}\label{thin9-rank1} &$\big(0,0,0,\frac{1}{6},\frac{5}{6}\big)$  & $\big(\frac{1}{2},\frac{1}{12},\frac{5}{12},\frac{7}{12}, \frac{11}{12} \big)$& & & \\ \hline
  \end{tabular}
\end{table}}

{\renewcommand{\arraystretch}{1.7}
\begin{table}[htbp]
\small
\addtolength{\tabcolsep}{-1pt}
\caption{The one monodromy group for which the associated quadratic form $\mathrm{Q}$ has signature $(4,1)$ and arithmeticity or thinness remains unknown.}
\setcounter{rownum-0}{0}
\centering
\begin{tabular}{ |c|  c|   c| c| c| c|}
\hline
  No. &$\alpha$ & $\beta$  \\ \hline
  \refstepcounter{rownum-0}\arabic{rownum-0}\label{unknown-rank1} & $\big(0,\frac{1}{10},\frac{3}{10},\frac{7}{10},\frac{9}{10}\big)$& $\big(\frac{1}{2},\frac{1}{5},\frac{2}{5},\frac{3}{5},\frac{4}{5} \big)$\\ \hline
  \end{tabular}
\label{ta:O41-open}
\end{table}}

\section{The ping-pong setup}\label{sec:ping-pong}
Our proof is very similar to that in~\cite{BDN21}, which, in turn, is an adaptation of the methods of Brav and Thomas~\cite{BT}. Using the notation of Theorem~\ref{thm:main-result}, our goal is to apply the following version of the ping pong lemma to the case where $G_1=\langle T\rangle$, $G_2=\langle B\rangle$, $H=\{I\}$, respectively to $G_1=\langle\pm T\rangle$, $G_2=\langle B\rangle$, $H=\{\pm I\}$.

\begin{thm}[see \cite{LS77}, Prop.~III.12.4]
Let $G$ be a group generated by two subgroups $G_1$ and $G_2$, whose intersection $H$ has index $>2$ in $G_{1}$ or $G_{2}$. Suppose that $G$ acts on a set $W$, and suppose that there are disjoint non-empty subsets $X, Y\subset W$, such that $(G_1\setminus H) Y \subseteq X$ and $(G_2\setminus H) X \subseteq Y$, with $H Y \subseteq Y$ and $H X \subseteq X$. Then $G = G_1 \ast_{H} G_2$.
\end{thm}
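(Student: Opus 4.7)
The plan is to invoke the universal property of the amalgamated free product to obtain a homomorphism $\pi\colon G_1 \ast_H G_2 \to G$ extending the two inclusions $G_i \hookrightarrow G$, which agree on $H$ by assumption. Surjectivity is immediate from $G = \langle G_1, G_2\rangle$, so the content is injectivity. Suppose for contradiction that $\pi(w) = 1_G$ for some $w \neq 1$, and write $w$ in reduced normal form $h_0\, g_1 g_2 \cdots g_n$ with $h_0 \in H$, each $g_i \in G_{\epsilon_i}\setminus H$, and $\epsilon_i \neq \epsilon_{i+1}$. If $n = 0$, then $w = h_0 \in H\setminus\{1\}$ and $\pi(w) = h_0 \neq 1_G$ since $H\hookrightarrow G$; henceforth assume $n\ge 1$.

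The core ping-pong argument proceeds by choosing a base point according to the rightmost letter: take $p \in X$ if $g_n \in G_2\setminus H$, and $p \in Y$ if $g_n \in G_1\setminus H$. The four hypotheses on the action guarantee that after applying $g_n, g_{n-1}, \ldots, g_1$ in order, the orbit strictly alternates between $Y$ and $X$, and the leading $h_0 \in H$ preserves whichever of $X, Y$ is reached last. A parity count shows that $\pi(w)\cdot p$ lies in the set \emph{opposite} to $p$ precisely when $n$ is odd, which by alternation is exactly when $g_1$ and $g_n$ lie in the same factor. In that case $\pi(w)\cdot p \neq p$, contradicting $\pi(w) = 1_G$.

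The delicate case is when $g_1, g_n$ lie in \emph{different} factors, so that $n$ is even and the naive ping-pong returns to the starting set. Here the index hypothesis is essential: assume without loss of generality $[G_1:H]>2$, with $g_1 \in G_1\setminus H$ and $g_n \in G_2\setminus H$. Since $H$ and $g_1 H$ are two distinct cosets that together do not cover $G_1$, one may pick $k \in G_1 \setminus (H \cup g_1 H)$. The conjugate $k^{-1} w k$ reduces in $G_1\ast_H G_2$ to the word $(k^{-1}g_1)\, g_2 g_3 \cdots g_n\, k$ of length $n+1$: by the choice of $k$ we have $k^{-1}g_1 \in G_1\setminus H$, while the adjacent letters $g_n \in G_2$ and $k \in G_1$ lie in different factors and so do not combine. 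This conjugate is thus a reduced same-factor word of odd length $n+1$, to which the preceding paragraph applies, yielding $\pi(k^{-1}wk) \neq 1_G$. Since conjugation preserves triviality, this contradicts $\pi(w) = 1_G$. The alternative case $[G_2:H] > 2$ is handled symmetrically by multiplying on the right by an appropriate $k \in G_2$.

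The main obstacle is exactly this different-factors subcase, where a direct ping-pong bounces back to the starting set and is silent. The index-$>2$ hypothesis is designed precisely to enable the coset-avoidance trick that converts the word into an odd-length, same-factor form amenable to the core argument; with this reduction in hand, the rest is a routine parity-tracking exercise.
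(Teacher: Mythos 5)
The paper offers no proof of its own here---it simply cites \cite[Prop.~III.12.4]{LS77}---and your argument is exactly the standard proof of that proposition: the universal property gives a surjection $\pi\colon G_1\ast_H G_2\to G$, the alternating parity ping-pong kills reduced words whose end letters lie in the same factor (odd length), and the coset-avoidance conjugation, which is precisely where the index-$>2$ hypothesis enters, converts the mixed-ends (even length) case into the previous one. Your proof is correct; the only cosmetic slips are that in the even case you silently absorb $h_0$ into $g_1$ (strictly, the coset to avoid is $(h_0g_1)H$ rather than $g_1H$, or one absorbs first, as your normal form permits), and that your ``without loss of generality'' tacitly uses the symmetry swapping $(G_1,X)\leftrightarrow(G_2,Y)$, which does preserve all the hypotheses.
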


We apply the ping pong lemma to the canonical action of $G$ on $\mathbb{R}^5$.
The two halves $X$, $Y$ of the ping pong table will both decompose into a union of open cones (with the origin removed) that is invariant under multiplication with $-I$. The table halves are constructed from a single non-empty convex cone $F$ that we will explicitly provide for each case. The exact construction depends on the order of $B$. Note that $T$ has always order $2$.

If $B$ has finite order, we set
\[X=F\cup -F,\qquad Y=\bigcup_{B^i\neq\pm I}B^i F\cup -B^i F.\]
To verify that we get a valid ping pong, we then have to check that $X$ and $Y$ are disjoint and that $T$ maps $Y$ into $X$. The other conditions on the ping pong table are automatically satisfied.

If $B$ has infinite order, we let
\[X=F_0\cup -F_0,\qquad F_0=F\cap -EF,\]
for a certain matrix $E$ that satisfies $E^2=EBE^{-1}B=ETE^{-1}T=I$. We also let $\eta=\min\,\{i>0\mid (B^i-I)^5=0\}$ and let $Y$ be the finite union
\[Y=Y^+\cup Y^-,\qquad Y^+=\bigcup_{1\leq i\leq\eta}B^i F\cup -B^i F,\qquad Y^-=EY^+.\]
To verify that we get a valid ping pong, we again have to check that $X$ and $Y$ are disjoint and that $T$ maps $Y$ into $X$. The condition that $B^{i\neq 0}$ maps $X$ into $Y$ is no longer automatic. To verify this in a finite number of steps, we check that $B$ maps both $X$ and $Y^+$ into $Y^+$, and that $B^{-1}$ maps both $X$ and $Y^-$ into $Y^-$, see the right diagram in Figure~\ref{fig:ping-pong}.

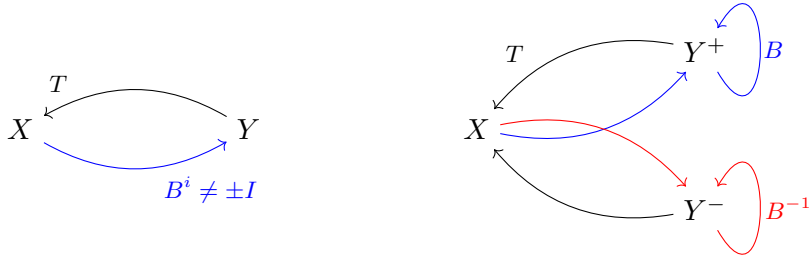
\begin{figure}[htbp]
\begin{minipage}{12cm}
\centering
\begin{tikzpicture}[scale=1.5]
\node (xa) at (-5,0) {$X$};
\node (y) at (-3,0) {$Y$};
\draw[blue,->,bend right]   (xa) edge (y);
\draw[black,->,bend right] (y) edge  (xa);
\node[xshift=0.5cm,yshift=0.6cm] at (xa) {\footnotesize\color{black}$T$};
\node[xshift=-0.5cm,yshift=-0.8cm] at (y) {\footnotesize\color{blue}$B^i\neq\pm I$};
\node (x) at (-1,0) {$X$};
\node (yp) at (1,0.7) {$Y^{+}$};
\node (ym) at (1,-0.7) {$Y^{-}$};
\draw[blue,->,bend right]   (x) edge (yp);
\draw[red,->,bend left]     (x) edge (ym);
\draw[black,->,bend right] (yp) edge  (x);
\draw[black,->,bend left]  (ym) edge  (x);
\node[xshift=0.5cm,yshift=1cm] at (x) {\footnotesize\color{black}$T$};
\node[xshift=0.9cm,yshift=0cm] at (yp) {\footnotesize\color{blue}$B$};
\node[xshift=1.1cm,yshift=0cm] at (ym) {\footnotesize\color{red}$B^{-1}$};
\draw[blue,->]  (yp) edge [out=300,in=60,distance=10mm]   (yp);
\draw[red,->]  (ym) edge [out=300,in=60,distance=10mm]   (ym);
\end{tikzpicture}
\caption{The ping pong table when $B$ has finite (left) and infinite order (right)}\label{fig:ping-pong}
\end{minipage}
\end{figure}

\section{Computation}
The computer calculations for constructing and verifying a working ping-pong table are quite similar to the symplectic case covered in~\cite{BDN21}, to which we refer the reader for a more detailed discussion.

The main difference in the orthogonal case is that $T$ is a reflection, not a transvection. The initial guess for the cone $F$ is now obtained from $\lim_{i\to\infty}(TB)^i$ instead of $\lim_{i\to\infty}T^i$. Then, as before, we iteratively expand $F$ by applying the ping-pong rules of Figure~\ref{fig:ping-pong}. Special care is necessary to make this process stop after finitely many steps.
The construction and verification of the ping-pong tables are not computation intensive, but too laborious to do manually.

For the remaining cases listed in Tables~\ref{ta:O32-open} and \ref{ta:O41-open}, our ping pong approach did not succeed, and as in~\cite{BDN21} we checked that the ping pong setup of Section~\ref{sec:ping-pong} must fail for any choice of $F$.
A more complicated ping pong could still work for these cases, but we suspect that many, if not all, of the remaining cases are, in fact, arithmetic.

\section{Verification}

Below, we list for all cases in Tables~\ref{ta:O32-thin} and \ref{ta:O41-thin} a precomputed cone $F$ in the form of a column matrix containing its spanning rays.
The following SageMath computer code, adapted from \cite{BDN21}, can be used to verify that the resulting ping pong tables satisfy the conditions from Section~\ref{sec:ping-pong} in each case.
The cones are implemented by the library class \verb|ConvexRationalPolyhedralCone|, which does all calculations with exact arithmetic in rational numbers.

\vspace{0.5cm}
\small
\begin{verbatim}
# The ping-pong table halves, which consist of unions of open cones,
# are represented in this code as a list of the closures of these cones.
# An open cone is non-empty if its closure has the same dimension as the
# ambient vector space. Two open cones are disjoint if the intersection
# of their closures has less than full dimension.

from itertools import count

def companion_matrix(polynomial):
  return block_matrix([[block_matrix([[matrix(1,4)],[matrix.identity(4)]]),
                        -matrix(polynomial.list()[:-1]).T]])

# check if the two sets of open cones are disjoint
def are_disjoint(CC, DD):
  return all(C.intersection(D).dim() < 5 for C in CC for D in DD)

# check if the first set of cones is contained in the second
def contained_in(CC, DD):
  return all(any(is_subcone(C, D) for C in CC) for D in DD)

# check if the first cone is contained in the second
def is_subcone(C, D):
  return all(ray in D for ray in C.rays())

# apply the linear transformation L to the set of cones
def transform_set(CC, L):
  return [Cone((L*C.rays().column_matrix()).T) for C in CC]

def verify_finite_order(B, T, F):
  if not T^2 == 1:
    return False # we assume that T has order 2
  eta = next(i for i in count(1) if (B^i-1)^5 == 0)
  if not B^eta == 1:
    return False # B not of finite order
  if not F.dim() == 5:
    return False # F has empty interior
  X = [F] + transform_set([F],-1)
  Y = [C for i in [1..eta-1] for C in transform_set(X, B^i) if not B^i == -1]
  if not are_disjoint(X, Y):
    return False # ping pong table halves not disjoint
  if not contained_in(transform_set(Y, T), X):
    return False # T does not map Y into X
  return True    # remaining conditions automatic, ping pong works

def verify_infinite_order(B, T, F):
  if not T^2 == 1:
    return False # we assume that T has order 2
  if not F.dim() == 5:
    return False # F has empty interior
  E = B*Permutation([5,4,3,2,1]).to_matrix()
  F0 = F.intersection(transform_set([F],-E)[0])
  X = [F0] + transform_set([F0],-1)
  eta = next(i for i in count(1) if (B^i-1)^5 == 0)
  Yplus = [C for i in [1..eta] for sgn in [1,-1]
                               for C in transform_set([F], sgn*B^i)]
  Yminus = transform_set(Yplus, E)
  if not are_disjoint(X, Yplus+Yminus):
    return False # ping pong table halves not disjoint
  if not contained_in(transform_set(Yplus+Yminus, T), X):
    return False # T does not map Y into X
  if not contained_in(transform_set(X+Yplus, B), Yplus):
    return False # B does not map both X and Y^+ into Y^+
  if not contained_in(transform_set(X+Yminus, B.inverse()), Yminus):
    return False # B^-1 does not map both X and Y^- into Y^-
  return True    # remaining conditions automatic, ping pong works

# example usage to verify Case 7 of type O(3,2)
A=companion_matrix(cyclotomic_polynomial(1)^5)
B=companion_matrix(cyclotomic_polynomial(2)*cyclotomic_polynomial(8))
M=matrix([[ -8,  -8, -1, -1,  0,  0, -1,  -3,  -3, -1],
          [  5,  35,  0,  4,  1,  1,  5,  14,   4,  2],
          [-45, -59, -4, -6, -3, -2, -9, -25, -21, -8],
          [ 59,  45,  6,  4,  3,  2,  8,  21,  25,  9],
          [-43, -13, -5, -1, -1, -1, -3,  -7, -17, -6]])
print(verify_finite_order(B, B*A.inverse(), Cone(M.T)))
\end{verbatim}
\normalsize

\subsection{Cases of type $O(3,2)$}
\subsubsection*{Case 1} $\alpha= \left(0,0,0,0,0\right)$, $\beta=\left(\frac{1}{2},\frac{1}{2},\frac{1}{2},\frac{1}{2},\frac{1}{2}\right)$, and $B$ has infinite order.
\[\adjustbox{max width=\textwidth}{%
$M = \left(\begin{array}{rrrrrrrrrrrrrr}
3 & -1 & -31 & -245 & -5 & -5 & -1 & -5 & -1 & -1 & -7 & 0 & -1 & -8 \\
-22 & -4 & 158 & 1194 & 23 & 24 & -4 & -24 & 4 & -6 & -32 & 1 & -5 & 39 \\
-84 & -6 & -332 & -2292 & -41 & -44 & -14 & -44 & -6 & -10 & -58 & -3 & -9 & -85 \\
-90 & -4 & 226 & 2118 & 33 & 40 & -4 & -40 & 4 & -10 & -48 & 3 & -9 & 69 \\
-31 & -1 & -245 & -999 & -10 & -15 & -9 & -15 & -1 & -5 & -15 & -1 & -4 & -47
\end{array}\right)$}\]

\subsubsection*{Case 2} $\alpha= \left(0,0,0,0,0\right)$, $\beta=\left(\frac{1}{2},\frac{1}{2},\frac{1}{2},\frac{1}{3},\frac{2}{3}\right)$, and $B$ has infinite order.
\[\adjustbox{max width=\textwidth}{%
$M = \left(\begin{array}{rrrrrrrrrrrrrrr}
-33 & -33 & -31 & -31 & -1 & -1 & -1 & -1 & -1 & -21 & -2 & -11 & -11 & -2 & 0 \\
-37 & 158 & -63 & 142 & -3 & -3 & -2 & 4 & 5 & -73 & 9 & 53 & -6 & 3 & 1 \\
-489 & -288 & -419 & -252 & -4 & -11 & -6 & -6 & -3 & -104 & -15 & -101 & -172 & -31 & -3 \\
57 & 258 & 35 & 202 & -3 & -1 & -4 & 4 & -1 & -83 & 17 & 95 & 24 & 1 & 3 \\
-290 & -95 & -266 & -61 & -1 & -8 & -9 & -1 & -2 & -31 & -11 & -38 & -97 & -17 & -1
\end{array}\right)$}\]

\subsubsection*{Case 3} $\alpha= \left(0,0,0,0,0\right)$, $\beta=\left(\frac{1}{2},\frac{1}{2},\frac{1}{2},\frac{1}{4},\frac{3}{4}\right)$, and $B$ has infinite order.
\[\adjustbox{max width=\textwidth}{%
$M = \left(\begin{array}{rrrrrrrrrrrrrrrrr}
-1 & -13 & -13 & -11 & -11 & -1 & -1 & -1 & -1 & -11 & -2 & -11 & -38 & -38 & -11 & -2 & 0 \\
-2 & -4 & 62 & -12 & 50 & -2 & -1 & 4 & 5 & -32 & 9 & 53 & 179 & -19 & 5 & 5 & 1 \\
-2 & -150 & -112 & -114 & -88 & -8 & -3 & -6 & -3 & -32 & -15 & -101 & -327 & -431 & -139 & -25 & -3 \\
-2 & 60 & 98 & 44 & 70 & 2 & -1 & 4 & -1 & -32 & 17 & 95 & 279 & 175 & 57 & 7 & 3 \\
-1 & -101 & -35 & -83 & -21 & -7 & -8 & -1 & -2 & -21 & -11 & -38 & -95 & -293 & -86 & -15 & -1
\end{array}\right)$}\]

\subsubsection*{Case 4} $\alpha= \left(0,0,0,0,0\right)$, $\beta=\left(\frac{1}{2},\frac{1}{2},\frac{1}{2},\frac{1}{6},\frac{5}{6}\right)$, and $B$ has infinite order.
\[\adjustbox{max width=\textwidth}{%
$M = \left(\begin{array}{rrrrrrrrrrrrrrrrr}
-1 & -13 & -13 & -1 & -1 & -1 & -1 & -2 & -2 & -11 & -38 & -38 & -11 & -2 & -3 & -3 & 0 \\
-1 & -3 & 58 & -1 & 0 & 4 & 7 & -3 & 9 & 53 & 179 & 17 & 16 & 7 & 13 & 8 & 1 \\
0 & -91 & -100 & -5 & 0 & -6 & -3 & 0 & -13 & -101 & -327 & -317 & -108 & -19 & -33 & -24 & -3 \\
-1 & 87 & 78 & 5 & 2 & 4 & -1 & -2 & 17 & 97 & 279 & 289 & 90 & 11 & 21 & 30 & 3 \\
-1 & -84 & -23 & -6 & -9 & -1 & -2 & -3 & -11 & -38 & -93 & -255 & -75 & -13 & -14 & -19 & -1
\end{array}\right)$}\]

\subsubsection*{Case 5} $\alpha= \left(0,0,0,0,0\right)$, $\beta=\left(\frac{1}{2},\frac{1}{3},\frac{2}{3},\frac{1}{6},\frac{5}{6}\right)$, and $B$ has finite order.
\[\adjustbox{max width=\textwidth}{%
$M  = \left(\begin{array}{rrrrrrrrrrrr}
-20 & -20 & -2 & -2 & -1 & -1 & 0 & 0 & -1 & -3 & -3 & -1 \\
15 & 89 & 0 & 1 & 0 & 4 & 1 & 1 & 5 & 14 & 4 & 2 \\
-139 & -153 & -3 & -1 & -5 & -6 & -3 & -2 & -9 & -25 & -24 & -9 \\
133 & 119 & -1 & 1 & 5 & 4 & 3 & 2 & 8 & 21 & 22 & 8 \\
-109 & -35 & -3 & -2 & -5 & -1 & -1 & -1 & -3 & -7 & -17 & -6
\end{array}\right)$}\]

\subsubsection*{Case 6} $\alpha= \left(0,0,0,0,0\right)$, $\beta=\left(\frac{1}{2},\frac{1}{5},\frac{2}{5},\frac{3}{5},\frac{4}{5} \right)$, and $B$ has finite order.
\[\adjustbox{max width=\textwidth}{%
$M =\left(\begin{array}{rrrrrrrrrrrrrrr}
-6 & -6 & -4 & -4 & -1 & -1 & -1 & -2 & -11 & -38 & -38 & -11 & -2 & -1 & 0 \\
-1 & 27 & 2 & 19 & -1 & 4 & 0 & 7 & 16 & 19 & 179 & 53 & 9 & 5 & 1 \\
-49 & -47 & -37 & -34 & -6 & -6 & -1 & -21 & -117 & -355 & -327 & -101 & -15 & -3 & -3 \\
35 & 37 & 26 & 29 & 4 & 4 & 1 & 11 & 79 & 251 & 279 & 95 & 17 & -1 & 3 \\
-39 & -11 & -27 & -10 & -6 & -1 & -7 & -13 & -75 & -255 & -95 & -38 & -11 & -2 & -1
\end{array}\right)$}\]

\subsubsection*{Case 7} $\alpha= \left(0,0,0,0,0\right)$, $\beta=\left(\frac{1}{2},\frac{1}{8},\frac{3}{8},\frac{5}{8}, \frac{7}{8} \right)$, and $B$ has finite order.
\[\adjustbox{max width=\textwidth}{%
$M = \left(\begin{array}{rrrrrrrrrr}
-8 & -8 & -1 & -1 & 0 & 0 & -1 & -3 & -3 & -1 \\
5 & 35 & 0 & 4 & 1 & 1 & 5 & 14 & 4 & 2 \\
-45 & -59 & -4 & -6 & -3 & -2 & -9 & -25 & -21 & -8 \\
59 & 45 & 6 & 4 & 3 & 2 & 8 & 21 & 25 & 9 \\
-43 & -13 & -5 & -1 & -1 & -1 & -3 & -7 & -17 & -6
\end{array}\right)$}\]

\subsubsection*{Case 8} $\alpha= \left(0,0,0,0,0\right)$, $\beta=\left(\frac{1}{2},\frac{1}{10},\frac{3}{10},\frac{7}{10}, \frac{9}{10} \right)$, and $B$ has finite order.
\[\adjustbox{max width=\textwidth}{%
$M = \left(\begin{array}{rrrrrrrrrrrr}
-10 & -10 & -1 & -1 & 0 & 0 & -1 & -3 & -7 & -7 & -3 & -1 \\
15 & 43 & 1 & 4 & 1 & 1 & 5 & 14 & 32 & 14 & 7 & 3 \\
-53 & -71 & -4 & -6 & -3 & -2 & -9 & -25 & -56 & -45 & -21 & -8 \\
71 & 53 & 6 & 4 & 3 & 2 & 8 & 21 & 45 & 56 & 25 & 9 \\
-43 & -15 & -4 & -1 & -1 & -1 & -3 & -7 & -14 & -32 & -14 & -5
\end{array}\right)$}\]

\subsubsection*{Case 9} $\alpha= \left(0,0,0,0,0\right)$, $\beta=\left(\frac{1}{2},\frac{1}{12},\frac{5}{12},\frac{7}{12}, \frac{11}{12}\right)$, and $B$ has finite order.
\[\adjustbox{max width=\textwidth}{%
$M = \left(\begin{array}{rrrrrrrrrrrr}
-12 & -12 & -1 & -1 & 0 & 0 & -1 & -3 & -7 & -7 & -3 & -1 \\
5 & 51 & 0 & 4 & 1 & 1 & 5 & 14 & 32 & 7 & 4 & 2 \\
-49 & -83 & -3 & -6 & -3 & -2 & -9 & -25 & -56 & -38 & -18 & -7 \\
95 & 61 & 7 & 4 & 3 & 2 & 8 & 21 & 45 & 63 & 28 & 10 \\
-63 & -17 & -5 & -1 & -1 & -1 & -3 & -7 & -14 & -39 & -17 & -6
\end{array}\right)$}\]

\subsubsection*{Case 10} $\alpha= \left(0,0,0,\frac{1}{4},\frac{3}{4}\right)$, $\beta=\left(\frac{1}{2},\frac{1}{2},\frac{1}{2},\frac{1}{3},\frac{2}{3}\right)$, and $B$ has infinite order.
\[\adjustbox{max width=\textwidth}{%
$M = \left(\begin{array}{rrrrrrrrrrrrrrrr}
-1 & -1 & -1 & -5 & -1 & -40 & -8944 & -8944 & -40 & -1 & -23 & -54 & -97 & -97 & -54 & -23 \\
-3 & -3 & 2 & -17 & 19 & 119 & 20877 & 14600 & -60 & 36 & 68 & 139 & 237 & -257 & -119 & -38 \\
-4 & -9 & -2 & -24 & -22 & -141 & -56171 & -68806 & -418 & -27 & -56 & -148 & -249 & -919 & -538 & -226 \\
-3 & -5 & 2 & -19 & 20 & 138 & 6198 & -6437 & -139 & 15 & 65 & 160 & 240 & -430 & -230 & -105 \\
-1 & -6 & -1 & -7 & -40 & -100 & -50376 & -56653 & -279 & -23 & -54 & -97 & -131 & -625 & -355 & -160
\end{array}\right)$}\]

\subsubsection*{Case 11} $\alpha= \left(0,0,0,\frac{1}{6},\frac{5}{6}\right)$, $\beta=\left(\frac{1}{2},\frac{1}{2},\frac{1}{2},\frac{1}{3},\frac{2}{3}\right)$, and $B$ has infinite order.
\[\adjustbox{max width=\textwidth}{%
$M =\left(\begin{array}{rrrrrrrrrrrrrrr}
-1 & 0 & -3 & -61 & -223 & -465 & -715 & -715 & -465 & -223 & -61 & -3 & -35 & -1 & -1 \\
-3 & 1 & 11 & 241 & 831 & 1637 & 2395 & -1935 & -1145 & -427 & -21 & 49 & -121 & -3 & 3 \\
-4 & -2 & 0 & -416 & -1320 & -2424 & -3368 & -7586 & -5190 & -2706 & -854 & -42 & -172 & -10 & -4 \\
-3 & 2 & 21 & 427 & 1145 & 1935 & 2581 & -1637 & -831 & -241 & -11 & -21 & -137 & -3 & 3 \\
-1 & -1 & -61 & -223 & -465 & -715 & -925 & -5255 & -3497 & -1723 & -485 & -23 & -51 & -7 & -1
\end{array}\right)$}\]

\subsubsection*{Case 12} $\alpha= \left(0,0,0,\frac{1}{6},\frac{5}{6}\right)$, $\beta=\left(\frac{1}{2},\frac{1}{5},\frac{2}{5},\frac{3}{5},\frac{4}{5}\right)$, and $B$ has finite order.
\[\adjustbox{max width=\textwidth}{%
$M =\left(\begin{array}{rrrrrrrrrrrrrrrrrrrr}
-1 & -1 & -1 & -27 & -64 & -99 & -121 & -132 & -132 & -121 & -99 & -64 & -27 & -18 & -18 & -1 \\
-1 & 3 & 16 & 107 & 229 & 332 & 385 & 407 & -119 & -110 & -77 & -29 & 10 & 6 & 71 & 25 \\
-5 & -4 & -32 & -173 & -341 & -464 & -515 & -539 & -673 & -625 & -550 & -403 & -211 & -116 & -101 & -46 \\
2 & 3 & 44 & 157 & 275 & 352 & 383 & 409 & 275 & 273 & 266 & 213 & 119 & 65 & 80 & 30 \\
-5 & -1 & -27 & -64 & -99 & -121 & -132 & -145 & -671 & -627 & -530 & -357 & -161 & -107 & -42 & -18
\end{array}\right)$}\]

\subsection{Cases of type $O(4,1)$}
\subsubsection*{Case 1} $\alpha= \left(0,0,0,\frac{1}{3},\frac{2}{3} \right)$, $\beta=\left(\frac{1}{2},\frac{1}{2},\frac{1}{2},\frac{1}{4},\frac{3}{4} \right)$, and $B$ has infinite order. 
\[\adjustbox{max width=\textwidth}{%
$M =\left(\begin{array}{rrrrrrrrrrr}
-1 & -1 & -1 & 0 & -1 & -1 & -1 & -1 & -3 & -3 & -1 \\
-2 & -2 & 1 & 1 & 2 & -3 & 1 & -1 & 5 & 1 & -2 \\
-2 & -5 & 0 & -1 & 0 & -3 & 1 & -5 & -5 & -9 & -4 \\
-2 & -4 & 1 & 1 & 0 & -3 & 1 & -5 & -3 & -7 & -4 \\
-1 & -4 & -1 & -1 & -1 & -2 & -2 & -4 & -10 & -14 & -5
\end{array}\right) $}\]

\subsubsection*{Case 2} $\alpha= \left(0,0,0,\frac{1}{3},\frac{2}{3} \right)$, $\beta=\left(\frac{1}{2},\frac{1}{2},\frac{1}{2},\frac{1}{6},\frac{5}{6} \right)$, and $B$ has infinite order.
\[\adjustbox{max width=\textwidth}{%
$M = \left(\begin{array}{rrrrrrrrrrr}
-1 & -1 & -1 & 0 & -1 & -2 & -1 & -1 & -3 & -3 & -1 \\
-1 & -1 & 1 & 1 & 2 & -3 & 1 & 0 & 4 & -1 & -1 \\
0 & -2 & 0 & -1 & 0 & 0 & 1 & -2 & -6 & -5 & -1 \\
-1 & -1 & 1 & 1 & 0 & -2 & 1 & -2 & 2 & 3 & -1 \\
-1 & -3 & -1 & -1 & -1 & -3 & -2 & -3 & -5 & -10 & -4
\end{array}\right)$}\]
\subsubsection*{Case 3} $\alpha= \left(0,0,0,\frac{1}{3},\frac{2}{3} \right)$, $\beta=\left(\frac{1}{2}, \frac{1}{5},\frac{2}{5},\frac{3}{5},\frac{4}{5} \right)$, and $B$ has finite order.
\[\adjustbox{max width=\textwidth}{%
$M =\left(\begin{array}{rrrrrrrrrrrrr}
-1 & -1 & 0 & -1 & -1 & -3 & -3 & -1 & -2 & -2 & -2 & -2 & -1 \\
-1 & 1 & 1 & 2 & 1 & 0 & 3 & 0 & 3 & 2 & 2 & 0 & -1 \\
-3 & 0 & -1 & 0 & 1 & -8 & -6 & -3 & -3 & -2 & -5 & -3 & -2 \\
-2 & 1 & 1 & 0 & 1 & 0 & 2 & -3 & -2 & -1 & -1 & 1 & -2 \\
-3 & -1 & -1 & -1 & -2 & -9 & -6 & -3 & -6 & -7 & -4 & -6 & -4
\end{array}\right) $}\]
\subsubsection*{Case 4} $\alpha= \left(0,0,0,\frac{1}{4},\frac{3}{4}\right)$, $\beta=\left(\frac{1}{2},\frac{1}{3},\frac{2}{3},\frac{1}{6},\frac{5}{6} \right)$, and $B$ has finite order.
\[\adjustbox{max width=\textwidth}{%
$M =\left(\begin{array}{rrrrrrrrrrrrr}
-1 & -1 & 0 & -1 & -1 & -1 & -1 & -2 & -2 & -2 & -2 & -1 & -1 \\
0 & 2 & 1 & 3 & 2 & 2 & 1 & 5 & 4 & 4 & 3 & 0 & 0 \\
-3 & -2 & -2 & -3 & -1 & -2 & -4 & -8 & -7 & -9 & -8 & -2 & -3 \\
1 & 2 & 2 & 2 & 1 & 3 & 1 & 5 & 6 & 6 & 7 & 0 & 2 \\
-3 & -1 & -1 & -1 & -1 & -2 & -3 & -6 & -7 & -5 & -6 & -3 & -4
\end{array}\right) $}\]
\subsubsection*{Case 5} $\alpha= \left(0,0,0,\frac{1}{4},\frac{3}{4}\right)$, $\beta=\left(\frac{1}{2}, \frac{1}{5},\frac{2}{5},\frac{3}{5},\frac{4}{5} \right)$, and $B$ has finite order.
\[\adjustbox{max width=\textwidth}{%
$M = \left(\begin{array}{rrrrrrrrrrr}
-1 & -1 & 0 & -1 & -1 & -1 & -1 & -3 & -3 & -1 & -1 \\
-1 & 2 & 1 & 3 & 2 & 2 & 0 & 8 & 4 & -1 & -1 \\
-4 & -2 & -2 & -3 & -1 & -2 & -5 & -13 & -14 & -3 & -4 \\
0 & 2 & 2 & 2 & 1 & 3 & 0 & 8 & 7 & -1 & 1 \\
-4 & -1 & -1 & -1 & -1 & -2 & -4 & -10 & -14 & -4 & -5
\end{array}\right)$}\]
\subsubsection*{Case 6} $\alpha= \left(0,0,0,\frac{1}{4},\frac{3}{4}\right)$, $\beta=\left(\frac{1}{2},\frac{1}{8},\frac{3}{8},\frac{5}{8}, \frac{7}{8} \right)$, and $B$ has finite order.
\[\adjustbox{max width=\textwidth}{%
$M =\left(\begin{array}{rrrrrrrrrrrrrrr}
-1 & -1 & 0 & -6 & -5 & -4 & -4 & -6 & -6 & -8 & -15 & -15 & -8 & -5 & -6 \\
0 & 2 & 6 & 18 & 9 & 7 & 6 & 14 & 10 & 18 & 37 & -1 & 7 & -1 & -1 \\
-2 & -2 & -13 & -18 & -2 & -7 & -14 & -23 & -15 & -34 & -42 & -26 & -31 & -2 & -11 \\
2 & 2 & 13 & 11 & 2 & 14 & 7 & 15 & 23 & 31 & 26 & 42 & 34 & 2 & 18 \\
-3 & -1 & -6 & -5 & -4 & -10 & -11 & -16 & -20 & -15 & -14 & -52 & -26 & -14 & -24
\end{array}\right) $}\]
\subsubsection*{Case 7} $\alpha= \left(0,0,0,\frac{1}{4},\frac{3}{4}\right)$, $\beta=\left(\frac{1}{2},\frac{1}{12},\frac{5}{12},\frac{7}{12}, \frac{11}{12}\right)$, and $B$ has finite order.
\[\adjustbox{max width=\textwidth}{%
$M =\left(\begin{array}{rrrrrrrrrrrrrrrrrrrrrrrrr}
-1 & -1 & 0 & -40 & -31 & -22 & -22 & -26 & -26 & -52 & -99 & -72 & -32 & -99 & -188 & -188 & -99 & -32 & -72 & -99 & -52 & -31 & -102 & -102 & -40 \\
0 & 2 & 40 & 120 & 53 & 35 & 40 & 56 & 26 & 130 & 245 & 117 & 24 & 89 & -34 & 465 & 241 & 42 & -40 & -27 & 47 & -9 & 275 & 53 & -9 \\
-1 & -2 & -89 & -120 & -4 & -35 & -62 & -95 & -47 & -212 & -266 & -43 & -11 & -311 & -144 & -511 & -420 & -53 & 50 & -85 & -173 & 27 & -399 & -217 & -31 \\
3 & 2 & 89 & 71 & 4 & 84 & 57 & 73 & 121 & 225 & 184 & 22 & 85 & 519 & 699 & 332 & 410 & 43 & 115 & 365 & 264 & 35 & 319 & 501 & 160 \\
-3 & -1 & -40 & -31 & -22 & -62 & -57 & -52 & -82 & -99 & -72 & -32 & -74 & -340 & -653 & -154 & -188 & -56 & -189 & -344 & -182 & -84 & -155 & -377 & -160
\end{array}\right) $}\]
\subsubsection*{Case 8} $\alpha= \left(0,0,0,\frac{1}{6},\frac{5}{6}\right)$, $\beta=\left(\frac{1}{2},\frac{1}{8},\frac{3}{8},\frac{5}{8}, \frac{7}{8} \right)$, and $B$ has finite order.
\[\adjustbox{max width=\textwidth}{%
$M =\left(\begin{array}{rrrrrrrrrrrrrrrrr}
-1 & -1 & 0 & -3 & -4 & -3 & -2 & -3 & -3 & -7 & -7 & -2 & -7 & -7 & -3 & -4 & -3 \\
0 & 3 & 3 & 12 & 13 & 8 & 5 & 10 & 3 & 15 & 26 & 1 & 26 & 13 & -1 & -1 & 1 \\
-3 & -4 & -8 & -18 & -16 & -8 & -6 & -16 & -15 & -43 & -48 & -6 & -48 & -41 & -5 & -10 & -13 \\
4 & 3 & 8 & 13 & 10 & 5 & 6 & 15 & 16 & 48 & 43 & 6 & 41 & 48 & 8 & 16 & 18 \\
-4 & -1 & -3 & -4 & -3 & -2 & -3 & -6 & -13 & -33 & -22 & -7 & -20 & -33 & -11 & -17 & -15
\end{array}\right)$}\]
\subsubsection*{Case 9} $\alpha= \left(0,0,0,\frac{1}{6},\frac{5}{6}\right)$, $\beta=\left(\frac{1}{2},\frac{1}{12},\frac{5}{12},\frac{7}{12}, \frac{11}{12}\right)$, and $B$ has finite order.
\[\adjustbox{max width=\textwidth}{%
$M = \left(\begin{array}{rrrrrrrrrrrrrrrrrrr}
-1 & -1 & 0 & -3 & -4 & -3 & -2 & -3 & -3 & -15 & -15 & -2 & -7 & -7 & -3 & -13 & -13 & -4 & -3 \\
0 & 3 & 3 & 12 & 13 & 8 & 5 & 10 & 3 & 34 & 53 & 1 & 26 & 11 & -1 & 49 & 20 & -1 & 1 \\
-2 & -4 & -8 & -18 & -16 & -8 & -6 & -16 & -12 & -96 & -110 & -4 & -48 & -36 & -2 & -90 & -68 & -6 & -10 \\
5 & 3 & 8 & 13 & 10 & 5 & 6 & 15 & 19 & 125 & 111 & 8 & 43 & 55 & 11 & 81 & 103 & 20 & 21 \\
-4 & -1 & -3 & -4 & -3 & -2 & -3 & -6 & -13 & -68 & -49 & -7 & -18 & -33 & -11 & -33 & -62 & -17 & -15
\end{array}\right)$}\]

\section*{Acknowledgements}
JB is supported through a fellowship from MPIM, Bonn. MN is supported by DFG grant 281869850 (RTG 2229, ``Asymptotic Invariants and Limits of Groups and Spaces'').

\nocite{}
\bibliographystyle{abbrv}
\bibliography{BDN}
\end{document}